\documentclass[12pt]{article}

\usepackage{amsmath}
\usepackage{amssymb}
\usepackage{amsthm}
\usepackage[all,cmtip]{xy}
\usepackage{fancyhdr}
\usepackage{euscript}
\usepackage{setspace}
\usepackage{graphicx,epsfig}
\usepackage{palatino, url, multicol}
\usepackage{mathrsfs} 
\usepackage{enumerate}   
\usepackage{verbatim} 

\setlength{\topmargin}{-.2in}
\setlength{\headsep}{.20in}
\setlength{\oddsidemargin}{.0in}
\setlength{\evensidemargin}{.0in}
\setlength{\textwidth}{6.5in}
\setlength{\textheight}{9.in}

\pdfpagewidth 8.5in
\pdfpageheight 11in

\theoremstyle{definition}

\newtheorem{theorem}{Theorem}[section]


\newtheorem{proposition}[theorem]{Proposition}

\newtheorem{corollary}[theorem]{Corollary}

\newcommand{\IR}{\hbox{$\mathbb{R}$}}
\newcommand{\IN}{\hbox{$\mathbb{N}$}}
\newcommand{\IZ}{\hbox{$\mathbb{Z}$}}

\newcommand{\IQ}{\hbox{$\mathbb{Q}$}}



\def\it{\itshape}

\def\tt{\texttt}
\def\bf{\textbf}


\def\IR{{\mathbb{R}}}

\def\IZ{{\mathbb{Z}}}

\def\IQ{{\mathbb{Q}}}
\def\IN{{\mathbb{N}}}

\def\I1{{\mathbb{1}}}





\def\limx0{\lim_{x \to 0}}





\def\intxyleq1{\underset{\| x - y  \| \leq 1}{\int}}
\def\intxygeq1{\underset{\| x - y  \| \geq 1}{\int}}
\def\intxizetaleq1{\underset{\| \xi - \zeta  \| \leq 1}{\int}}
\def\intxizetageq1{\underset{\| \xi - \zeta \| \geq 1}{\int}}



\def\tab{\hskip 1mm}

\def\tab{\hspace{.1pc}}
\def\ttab{\hspace{1pc}}


\newcounter{hours}
\newcounter{minutes}
\newcommand\printtime{%
  \setcounter{hours}{\the\time/60}%
  \setcounter{minutes}{\the\time-\value{hours}*60}%
  \ifthenelse{\value{hours} > 12}
     {
       \setcounter{hours}{\value{hours}-12}%
       \thehours:\theminutes \ p.m.                
     }
     {
       \thehours:\theminutes \ a.m.                
     } 
}

\def\putdate{{\tt Compiled on \the\month-\the\day-\the\year \ at\printtime} \\}



\usepackage[linesnumbered,ruled]{algorithm2e}
\usepackage{enumitem,color,amssymb}

\lhead{The Ostrowski Expansions Revealed}
\rhead{A.\ Bourla}
\lfoot{}
\cfoot{\thepage}
\rfoot{}

\pagestyle{fancy}

\begin{document} 

\title{The Ostrowski Expansions Revealed}
 \author{A. Bourla}
 \date{\today}
 \maketitle
\begin{abstract}
\noindent We provide algorithms for the absolute and alternating Ostrowski Expansions of the continuum and provide proofs for their uniqueness.  
\end{abstract} 

\section{Introduction}{}

\noindent  

\noindent The various algorithms that construe the Ostrowski Expansions rely on the continued fraction expansion of a fixed irrational number $\alpha$ in the interval in order to represent other real numbers `base--$\alpha$' and `base--$(-\alpha)$'. They are utilized in a broad range of applications, ranging from Diophantine Approximation \cite{B1, IN} to symbolic dynamics and coding theory \cite{R,S}, for a through survey refer to \cite{B2}. Given $r \in \IR$, we define the \bf{floor} $\lfloor r \rfloor$ of the real number $r$ to be the largest integer smaller than or equal to $r$ and expand $r$ as a continued fraction using the following iteration scheme:\\  

\IncMargin{1em}
\begin{algorithm}[H]
\SetKwInOut{Input}{input}\SetKwInOut{Output}{output}
\Input{$r \in \IR$}
\Output{$\ell \in \IN^\infty := \IZ_{\ge1}\cup \{\infty\}$, \tab $a_0 \in \IZ, \tab \left<a_k\right>_1^\ell \subset \IZ_{\ge1}$}
set $a_0 := \lfloor r \rfloor, \tab \alpha_0 := r-a_0, \tab \ell := \infty, \tab k:=1$\;
\While{$\alpha_{k-1} > 0$}{
set $a_k:=\lfloor1/\alpha_{k-1}\rfloor$\;
set $\alpha_k:=1/\alpha_{k-1}-a_k \in [0,1)$\;
set $k:=k+1$\; 
}
set $\ell:= k-1$\;
\caption{continued fraction expansion}\label{rcf}
\end{algorithm}\DecMargin{1em}
\vspace{1pc}

\noindent The proof of the existence and uniqueness for this expansion as well as the assertion of the rest of the claims made in this section can be found in the classical exposition \cite{K}. This iteration process will terminate with a finite value $\ell$ precisely when $\alpha$ is rational. The assignment of the digit $a_k$ in line--3 yields the inequality
\begin{equation}\label{a_k}
a_k\alpha_{k-1} \le 1 < (a_k+1)\alpha_{k-1}, \ttab 1 \le k < \ell+1,
\end{equation}
(where $\infty+1 := \infty = \ell$ when applicable). After we rewrite the assignment in line--4 as $\alpha_{k-1} = (a_k + \alpha_k)^{-1}$, we obtain the expansion
\[r = a_0 + \alpha_0 = a_0 + \dfrac{1}{a_1+\alpha_1} = a_0 + \dfrac{1}{a_1+\dfrac{1}{a_2+\alpha_2}} = ... = a_0 + \dfrac{1}{a_1 + \dfrac{1}{a_2 + \dfrac{1}{a_3+\dfrac{1}{\ddots}}}},\]
whose truncation at the $k < \ell+1$ step yields the convergent
\[\frac{p_k}{q_k} := a_0 + \dfrac{1}{a_1 + \dfrac{1}{a_2 +\dfrac{1}{\ddots \begin{matrix}\\ +\dfrac{1}{a_k} \end{matrix}}}}.\]
\noindent We fix $\alpha \in (0,1) \backslash \IQ$ throughout and, after we plug it as input in the algorithm \ref{rcf}, we obtain the value $a_0=0$ and the infinite digit sequence $\left<a_k\right>_1^\infty$. We end this section by quoting two well known facts about the resulting sequence of convergents $\left<p_k/q_k\right>_0^\infty$, namely the recursion equation
\begin{equation}\label{pq_recursion}
q_{-1}=p_0 :=0, \ttab p_{-1}=q_0 :=1, \ttab p_k =  a_k{p_{k-1}} + p_{k-2},\ttab q_k =  a_k{q_{k-1}} + q_{k-2}, \ttab k \ge 1
\end{equation}
and the inequality 
\begin{equation}\label{convergent_bound}
\left|\alpha - \dfrac{p_k}{q_k} \right| < \dfrac{1}{q_k^2}, \ttab k \ge 0.
\end{equation}

\section{Basic definitions and identities}
 
The base--$\alpha$ and base--$(-\alpha)$ Ostrowski Expansions are dot products of two sequences: a digit sequence and a sequence of certain coefficients depending on $\alpha$, which will we now define and study. After applying algorithm \ref{rcf} and letting $\left<p_k/q_k\right>_{-1}^\infty$ be as in equation \eqref{pq_recursion}, we define the coefficients
\begin{equation}\label{theta_def}
\theta_k := q_k\alpha-p_k, \ttab k \ge -1.
\end{equation}
Utilizing this definition and the equations \eqref{pq_recursion} in an induction argument, we arrive at the recursion equations
\begin{equation}\label{theta_recursion}
\theta_{-1} = -1, \ttab \theta_0 = \alpha, \ttab \theta_k = \theta_{k-2} +  a_k\theta_{k-1}, \ttab |\theta_k| = |\theta_{k-2}| - a_k|\theta_{k-1}|, \ttab k \ge 1.
\end{equation}
Multiplying both sides of the inequality \eqref{convergent_bound} by $q_k$ yields the inequality
\begin{equation}\label{theta_bound}
|\theta_k| < q_k^{-1}, \ttab k \ge 0.
\end{equation}
Assuming that $\theta_k = -\theta_{k-1}\alpha_k$ for some $k \in \IN$, where $\alpha_k$ is as in line--4 of algorithm \ref{rcf}, we use this recursion relationship to obtain the equality
\[\theta_{k+1} = \theta_{k-1} + a_{k+1}\theta_k = \theta_{k-1}\left(1-a_{k+1}\alpha_k \right) = \theta_{k-1}\alpha_k\left(\dfrac{1}{\alpha_k}-a_{k+1} \right) = -\theta_k{\alpha_{k+1}}. \]
Since we have $\theta_0=\alpha=-\theta_{-1}\alpha$, that is, the assumption holds for $k=0$, we have just proved by induction that
\begin{equation}\label{theta_induction}
\theta_k = -\theta_{k-1}\alpha_k = (-1)^k{\alpha_0}{\alpha_1}...{\alpha_k}, \ttab k \ge 0,
\end{equation}
In particular, this shows that the sequence $\left<\theta_k\right>_{-1}^\infty$ is alternating as in
\begin{equation}\label{theta_alt}
|\theta_k| = (-1)^k\theta_k, \ttab k \ge -1.
\end{equation}
In tandem with the inequality \eqref{a_k}, we obtain the inequality
\[\dfrac{a_{k+1}|\theta_k|}{|\theta_{k-1}|} = a_{k+1}\alpha_k < 1 < (a_{k+1}+1)\alpha_k = \dfrac{(a_{k+1}+1)|\theta_k|}{|\theta_{k-1}
|} ,\]
that is,
\begin{equation}\label{theta_a_k}
a_{k+1}|\theta_k| < |\theta_{k-1}| < (a_{k+1}+1)|\theta_k|, \ttab k \ge 0.
\end{equation}
Since by definition \eqref{theta_recursion}, we have $|\theta_0| < |\theta_{-1}|$ and since $|\alpha_k|<1$ by its definition in line--4 of algorithm \ref{rcf}, the formula \eqref{theta_induction} also asserts that the sequence $\left<|\theta_k|\right>_{-1}^\infty$ is strictly decreasing to zero, that is,
\begin{equation}\label{eta_decreasing}
|\theta_{k+1}| < |\theta_k| \to 0, \ttab \text{as \tab $k \to \infty$}.
\end{equation}
While, by the alternating series test, this is enough to assert the convergence of the series $\sum_{k:=1}^\infty{a_k}\theta_{k-1}$, this series, in fact, converges absolutely:
\begin{proposition}\label{conv}
The infinite series $\sum_{k:=1}^\infty{a_k}|\theta_{k-1}|$ converges for all $\alpha$.
\end{proposition}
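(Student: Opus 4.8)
The plan is to exploit the second recursion in \eqref{theta_recursion}, namely $|\theta_k| = |\theta_{k-2}| - a_k|\theta_{k-1}|$, which I would immediately rearrange into
\[
a_k|\theta_{k-1}| = |\theta_{k-2}| - |\theta_k|, \ttab k \ge 1.
\]
The point is that the general term of the series is now written as a difference of entries of the sequence $\left<|\theta_k|\right>$ two indices apart, so the partial sums should telescope rather than require any delicate estimation. This is much stronger than what the alternating series test alone would give, and it is the natural route to \emph{absolute} convergence.

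Next I would form the $N$-th partial sum $S_N := \sum_{k=1}^N a_k|\theta_{k-1}|$ and substitute the rearranged identity to obtain $S_N = \sum_{k=1}^N \left(|\theta_{k-2}| - |\theta_k|\right)$. Splitting this into the two sums $\sum_{k=1}^N |\theta_{k-2}|$ and $\sum_{k=1}^N |\theta_k|$ and reindexing the first, almost all terms cancel and I expect to be left with only the four boundary contributions:
\[
S_N = |\theta_{-1}| + |\theta_0| - |\theta_{N-1}| - |\theta_N|.
\]
Here the only care needed is to track the indexing of the telescope correctly and to handle the first couple of terms separately; this is the one place where a sign error or an off-by-one slip could creep in, so I would verify the cancellation explicitly for small $N$ before asserting the general pattern.

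Finally, I would insert the known initial values $|\theta_{-1}| = 1$ and $|\theta_0| = \alpha$ from \eqref{theta_recursion}, giving $S_N = 1 + \alpha - |\theta_{N-1}| - |\theta_N|$. Letting $N \to \infty$ and invoking \eqref{eta_decreasing}, which guarantees that $|\theta_{N-1}|, |\theta_N| \to 0$, the partial sums converge, so the series converges for every $\alpha$ (in fact to the explicit value $1 + \alpha$). There is no genuine obstacle once the telescoping is spotted; the substantive step is simply recognizing that the alternating-sign form of the recursion \eqref{theta_recursion} turns the summand into an exact difference, after which monotone decay to zero closes the argument.
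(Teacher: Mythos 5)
Your proof is correct, but it follows a genuinely different route from the paper's. The paper proves convergence by comparison and case analysis: it uses \eqref{theta_a_k} to bound the series by $\sum_{k=-1}^\infty|\theta_k|$, derives the ratio estimate $|\theta_{k+1}|/|\theta_k| < 1/a_{k+1}$ from \eqref{pq_recursion} and \eqref{theta_bound}, concludes via the comparison and ratio tests when $\limsup_{k\to\infty} a_k \ge 2$, and treats the remaining case $\limsup_{k\to\infty} a_k = 1$ (noble $\alpha$, i.e.\ a tail of $1$'s) separately by reducing to the golden section and summing a geometric series. You instead rearrange the recursion \eqref{theta_recursion} into the exact identity $a_k|\theta_{k-1}| = |\theta_{k-2}| - |\theta_k|$ and telescope the \emph{partial} sums, obtaining $S_N = |\theta_{-1}| + |\theta_0| - |\theta_{N-1}| - |\theta_N| = 1 + \alpha - |\theta_{N-1}| - |\theta_N|$, which converges to $1+\alpha$ by \eqref{eta_decreasing}; since all terms are nonnegative this is absolute convergence, and the one fact you rely on, $|\theta_N| \to 0$, is established in the paper before the proposition, so there is no circularity. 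Your computation is essentially the one the paper performs \emph{after} the proposition, in \eqref{eta_tail} and \eqref{abs_rep}, to evaluate the sum once convergence is known --- but by carrying it out at the level of finite partial sums you turn the evaluation into the convergence proof itself. This buys a shorter, more elementary argument with no case split, no appeal to the ratio test, and the explicit value $1+\alpha$ for free; what the paper's route provides instead is a collection of auxiliary bounds (the comparison with $\sum|\theta_k|$ and the ratio estimate) and the discussion of the noble-number case, which have some independent interest but are not needed for the proposition.
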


\begin{proof}
By the inequality \eqref{theta_a_k}, we see that $\sum_{k=1}^\infty{a_k}|\theta_{k-1}| < \sum_{k=-1}^\infty|\theta_k|$ and by the recursion equation \eqref{pq_recursion} and the inequality \eqref{theta_bound} we have
\[\dfrac{|\theta_{k+1}|}{|\theta_k|} < \dfrac{q_k}{q_{k+1}} = \dfrac{q_k}{a_{k+1}q_k + q_{k-1}} < \dfrac{1}{a_{k+1}}, \ttab k \ge 0.\]
Thus, as long as $\displaystyle{\limsup_{k\to\infty}\left<a_k\right>} \ge 2$, we conclude convergence from the simple comparison and ratio tests. When $\displaystyle{\limsup_{k\to\infty}\left<a_k\right>} = 1$, then $\alpha$ must be a noble number, whose continued fraction expansion ends with a tail of 1's. By the limit comparison test, we need only establish the convergence for this tail, that is for $\alpha$ where $a_k=1$ for all $k \ge 1$. After using the assignments of line--3 and line--4 in algorithm \ref{rcf}, we write  
\[\alpha_k = \dfrac{1}{1+\alpha_{k+1}} = \dfrac{1}{1 + \dfrac{1}{1+\dfrac{1}{\ddots}}} = \dfrac{1}{1+\alpha_k}, \ttab k \ge 0.\]
The solution for the resulting quadratic equation is the golden section $\phi := \alpha_k = .5(-1+5^{.5}) \approx .618$. Then the identity \eqref{theta_induction} and the geometric sum formula assert that 
\[\sum_{k=1}^\infty{a_k}|\theta_{k-1}| = \sum_{k=0}^\infty|\theta_k| = \sum_{k=0}^\infty\alpha_0\alpha_1...\alpha_k =
\sum_{k=0}^\infty\phi^{k+1} = 1 + \phi,\]
which proves convergence for this case as well.
\end{proof} 

\noindent After fixing a finite index $n \ge 1$, we can now use formula \eqref{theta_recursion} to rewrite the tail\\ $\sum_{k:=n}^\infty{a_k}|\theta_{k-1}|$ as the telescoping series 
\begin{equation}\label{eta_tail}
a_n|\theta_{n-1}| +  a_{n+1}|\theta_n| + ...  = (|\theta_{n-2}| - |\theta_n|) + (|\theta_{n-1}| - |\theta_{n+1}|)+ ... = |\theta_{n-2}| + |\theta_{n-1}|.
\end{equation}
After plugging $n:=1$ and plugging the values for $\theta_{-1}$ and $\theta_0$ as in the equation \eqref{theta_recursion}, we use this identity to explicitly evaluate the sum
\begin{equation}\label{abs_rep}
\sum_{k=1}^\infty{a_k}|\theta_{k-1}| = 1 + \alpha.
\end{equation}
We use the relationship \eqref{theta_recursion} again, we can write $\sum_{k:=1}^\infty{a_k}\theta_{k-1}$ as the telescopic series
\[a_1\theta_0 +  a_2\theta_1 + a_3\theta_2 + ...  = (\theta_1 - \theta_{-1}) + (\theta_2 - \theta_0) + (\theta_3 - \theta_1) + ... =  - \theta_{-1} - \theta_0,\]
and evaluate this sum as
\begin{equation}\label{cond_rep}
\sum_{k=1}^\infty{a_k}\theta_{k-1} = 1 - \alpha.
\end{equation}
Subtracting the sum \eqref{cond_rep} from the sum \eqref{abs_rep} and dividing by two yields the self representations
\begin{equation}\label{self_rep}
\alpha = \sum_{k=1}^\infty{a_{2k}}|\theta_{2k-1}| = -\sum_{k=1}^\infty{a_{2k}}\theta_{2k-1}.
\end{equation}
Adding the sum \eqref{cond_rep} to the sum \eqref{abs_rep} and then dividing by two yields the expansion of unity
\begin{equation}\label{unit_rep}
1 = \sum_{k=0}^\infty{a_{2k+1}}\theta_{2k}=\sum_{k=0}^\infty{a_{2k+1}}|\theta_{2k}|.
\end{equation}

\section{The Absolute Ostrowski Expansion}

The base--$\alpha$ Absolute Ostowski Expansion is a sum of the form $\sum_{k=1}^\infty{d_k}|\theta_{k-1}|$, where $0 \le d_k \le a_k$ along with all its finite truncations. While a simple comparison to the convergent series in proposition \ref{conv} proves its existence, it is by no means unique. For instance, using the definition \eqref{theta_recursion} of $\theta_0 := \alpha$ we see that after setting $\ell = d_1 :=1$, we obtain the self-expansion, which is different from formula \eqref{self_rep}. To achieve uniqueness, we will require the digit sequence to adhere to the so called Markov Conditions. We say that the sequence $\left<b_k\right>_1^\infty$ is $\boldsymbol{\alpha}$\bf{--admissible} when: 
\begin{enumerate}[label=(\roman*)]
\item for all $k$ we have $0 \le b_k \le a_k$ not all zero.
\item if $b_k = a_k$, then $b_{k+1} = 0$.
\item for infinitely many odd and even indexes $k$ we have $b_k \le a_k-1$.  
\end{enumerate}
We then expand this definition to the finite digit sequence $\left<b_k\right>_1^\ell$ with $\ell < \infty$ and $b_\ell \ge 1$ by testing these conditions against the infinite sequence $\left<b_k\right>_1^\infty$ obtained by letting $b_k := 0$ for all $k \ge \ell+1$.

\begin{theorem}\label{Ost_abs}
For all $\ell \in \IN^\infty$ and $\alpha$--admissible digit sequences $\left<b_k\right>_1^\ell$, we have $\sum_{k=1}^\ell{b_k|\theta_{k-1}|} \in (0,1)$. Furthermore, for every real number $\beta \in (0,1)$, there exists a unique limit $\ell\in\IN^\infty$ and an $\alpha$--admissible digit sequence $\left<b_k\right>_1^\ell$ (with $b_\ell \ge 1$ when $\ell$ is finite) such that $\beta = \sum_{k=1}^\ell{b_k|\theta_{k-1}|}$.
\end{theorem}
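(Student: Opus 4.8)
The whole statement can be organized around a single tail estimate. Writing $S_m^n := \sum_{k=m}^n b_k|\theta_{k-1}|$, I would first prove, for any sequence obeying only conditions (i) and (ii), the bound
\[ S_m^n \le |\theta_{m-2}| - |\theta_n|, \qquad 1 \le m \le n, \]
by induction on $n-m$. The base case $n=m$ is the single inequality $b_m|\theta_{m-1}| \le a_m|\theta_{m-1}| = |\theta_{m-2}|-|\theta_m|$, which is just the recursion \eqref{theta_recursion}. For the inductive step I split on whether $b_m<a_m$ (apply the hypothesis to the tail from $m+1$) or $b_m=a_m$ (then condition (ii) forces $b_{m+1}=0$, and I apply the hypothesis to the tail from $m+2$); in both cases the recursion \eqref{theta_recursion} collapses the estimate to $|\theta_{m-2}|-|\theta_n|$. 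Taking $m=1$, $n\to\infty$ and using \eqref{eta_decreasing} gives $\sum_{k\ge1} b_k|\theta_{k-1}| \le |\theta_{-1}| = 1$, while positivity of the weights and the requirement that the digits are not all zero give the lower bound, so the sum lies in $(0,1]$.

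Upgrading the upper bound to a strict inequality is where condition (iii) enters, and this is the step I expect to be the crux. I would argue by contradiction: if $S_1^\infty = |\theta_{-1}|$, then the tail bound is attained at $m=1$. Re-running the inductive step shows attainment is incompatible with the branch $b_m<a_m$, which loses an extra $|\theta_m|>0$, so equality forces $b_1=a_1$, hence $b_2=0$ by (ii), and then the bound at $m=3$ is again attained. Iterating, equality propagates down the odd indices and forces $b_{2j+1}=a_{2j+1}$, $b_{2j}=0$ for all $j$ --- exactly the expansion of unity \eqref{unit_rep}. This pattern has no odd reduction, contradicting (iii). Run from a general starting index $m$, the same argument yields the strict tail bound $\sum_{k\ge m} b_k|\theta_{k-1}| < |\theta_{m-2}|$, the form I will reuse below. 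In particular every admissible sum is $<1$, completing the range claim.

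For existence I would run a greedy algorithm on $\beta$: set $\beta_0:=\beta$ and, maintaining the invariant $\beta_{k-1}\in[0,|\theta_{k-2}|)$, define $b_k:=\lfloor \beta_{k-1}/|\theta_{k-1}|\rfloor$ and $\beta_k:=\beta_{k-1}-b_k|\theta_{k-1}|$. The inequality \eqref{theta_a_k} gives $\beta_{k-1} < |\theta_{k-2}| < (a_k+1)|\theta_{k-1}|$, hence $b_k\le a_k$, so (i) holds, and the floor gives $\beta_k\in[0,|\theta_{k-1}|)$, preserving the invariant; moreover $b_k=a_k$ forces $\beta_k<|\theta_k|$ and therefore $b_{k+1}=0$, which is (ii). Since $|\theta_{k-1}|\to0$ by \eqref{eta_decreasing}, we get $\beta_k\to0$ and $\sum_{k}b_k|\theta_{k-1}|=\beta$. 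Condition (iii) is the only nontrivial point: if all even digits were eventually maximal, then (ii) would make the odd digits eventually zero, and the telescoping identity \eqref{eta_tail} would force a remainder $\beta_{m}$ to equal $|\theta_{m-1}|$, violating the strict invariant $\beta_m<|\theta_{m-1}|$; the symmetric argument rules out eventually maximal odd digits, so reductions of both parities occur infinitely often. (If the algorithm reaches $\beta_\ell=0$ the expansion is finite with $b_\ell\ge1$, and (iii) holds through the zero padding.)

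Finally, uniqueness follows cleanly from the strict tail bound. Given two admissible representations of $\beta$, let $j$ be the first index at which they differ, say $b_j>b_j'$. Subtracting the two expansions gives
\[ (b_j-b_j')|\theta_{j-1}| = \sum_{k>j}(b_k'-b_k)|\theta_{k-1}| \le \sum_{k>j} b_k'|\theta_{k-1}| < |\theta_{j-1}|, \]
the last step being the strict tail bound applied to the (still admissible) tail of the second sequence. Thus $b_j-b_j'<1$, contradicting $b_j-b_j'\ge1$, so the two sequences and their common length $\ell$ coincide. The entire argument therefore rests on the strict tail estimate of the second paragraph, whose delicate ingredient is precisely the use of condition (iii) to exclude the boundary expansion \eqref{unit_rep}.
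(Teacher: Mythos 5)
Your proposal is correct, but it is organized quite differently from the paper's proof, and the difference is worth spelling out. The paper never isolates your key lemma (the tail estimate $\sum_{k=m}^n b_k|\theta_{k-1}| \le |\theta_{m-2}| - |\theta_n|$ and its strict form under condition (iii)); instead it proves the range claim $\sum b_k|\theta_{k-1}| \in (0,1)$ by a direct case analysis on whether $b_1 \le a_1-1$ or $b_1=a_1$, using the explicit identities \eqref{abs_rep} and \eqref{eta_tail} and, in the second case, condition (iii) to locate the first odd index with $b_{2n+1}\le a_{2n+1}-1$ --- essentially your strict-bound argument run once at $m=1$ rather than stated for general $m$. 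Existence is handled identically (your greedy scheme is exactly the paper's Algorithm 3, including the verification of (i) and (ii) from the invariant $\beta_k < |\theta_{k-1}|$), though for condition (iii) the paper argues by restarting the algorithm at the inputs $(\alpha_{n-1},\beta_{n-1})$ and contradicting the unit expansion \eqref{unit_rep}, where you contradict the strict invariant via telescoping --- a cleaner bookkeeping of the same idea. The real divergence is uniqueness: the paper appeals to the uniqueness of quotient and remainder in each division step, which shows only that the algorithm's output is deterministic; it leaves implicit why an \emph{arbitrary} admissible representation must agree with the greedy one, which is the actual content of the uniqueness claim. Your first-differing-index argument, powered by the strict tail bound $\sum_{k\ge m} b_k|\theta_{k-1}| < |\theta_{m-2}|$ applied to the tail of the competing representation, supplies exactly that missing link. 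So what the paper's route buys is brevity and concrete closed-form computations; what yours buys is a single reusable estimate that simultaneously yields the range claim, the verification of (iii), and a genuinely complete uniqueness proof. One small point to tidy up: in the inductive step of the tail bound, the branch $b_m=a_m$, $b_{m+1}=0$ needs the trivial edge case $n=m+1$ (empty tail from $m+2$), which follows from $|\theta_{m+1}|<|\theta_m|$; and you should note explicitly that tails and zero-padded finite sequences inherit condition (iii), since your strict bound is invoked for them.
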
 

\begin{proof}
Given a limit $\ell \in \IN^\infty$ and an $\alpha$--admissible digit sequence $\left<b_k\right>_1^\ell$, we first show that $\sum_{k:=1}^\ell{b_k}|\theta_{k-1}| \in (0,1)$. When $\ell=0$ we obtain the vacuous expansion of nullity and when $1 \le \ell < \infty$ we first pad this sequence with a tail of zeros and obtain the $\alpha$--admissible sequence $\left<b_k\right>_1^\infty$. If $b_1 \le a_1-1$ then we use the identity \eqref{abs_rep} as well as condition--(i) to obtain the inequality
\[0 < \sum_{k=1}^\ell{b_k}|\theta_{k-1}| \le \sum_{k=1}^\infty{b_k}|\theta_{k-1}| \le (a_1-1)\theta_0 + \sum_{k=2}^\infty{b_k}|\theta_{k-1}| < (a_1-1)\theta_0 + \sum_{k=2}^\infty{a_k}|\theta_{k-1}|\]
\[ = (a_1-1)\theta_0 - a_1\theta_0 + \sum_{k=1}^\infty{a_k}|\theta_{k-1}| = (a_1-1)\theta_0 - a_1\theta_0+(1+\alpha) = 1.\]
If $b_1=a_1$, then by condition--(ii), we must have $b_2=0$. Let $n \ge 1$ be the first index for which $b_{2n+1} \le a_{2n+1} -1$, so that $b_{2k-1} = a_{2k-1}$ and $b_{2k} = 0$ for all $1 \le k \le n$ (the existence of $n$ is guaranteed by condition--(iii)). Using the recursive equation \eqref{theta_recursion}, we evaluate the finite sum
\[\sum_{k=1}^{2n}{a_k}|\theta_{k-1}| = \sum_{k=1}^{2n}{a_{2k-1}}|\theta_{2k-2}| = a_1|\theta_0| + a_3|\theta_2| + ... + a_{2n-1}|\theta_{2n-2}|\]  
\[ = (|\theta_{-1}| - |\theta_1|) + (|\theta_1| - |\theta_3|) + ... + (|\theta_{2n-1}| - |\theta_{2n+1}|) = |\theta_{-1}| - |\theta_{2n-1}| = 1 - |\theta_{2n+1}|.\]
In tandem with formula \eqref{eta_tail}, we obtain the desired inequalities 
\[0 < \sum_{k=1}^\infty{b_k}|\theta_{k-1}| = \sum_{k=1}^{2n}{a_{2k-1}}|\theta_{2k-2}| + b_{2n+1}|\theta_{2n}| + \sum_{k=2n+2}^\infty{b_k}|\theta_{k-1}|\]
\[ \le \sum_{k=0}^{n}{a_{2k-1}}|\theta_{2k-2}| + (a_{2n+1}-1)|\theta_{2n}| + \sum_{k=2n+2}^\infty{b_k}|\theta_{k-1}|\] 
\[< \sum_{k=0}^{n}{a_{2k-1}}|\theta_{2k-2}| + a_{2n+1}|\theta_{2n}| - |\theta_{2n}| + \sum_{k=2n+2}^\infty{a_k}|\theta_{k-1}|\] 
\[= (1 - |\theta_{2n-1}|) + (|\theta_{2n-1}| - |\theta_{2n+1}|) - |\theta_{2n}| + (|\theta_{2n}| + |\theta_{2n+1}|)=1\]
and conclude that $\sum_{k=1}^\ell{b_k}|\theta_{k-1}| \in (0,1)$. \\

\noindent Given $\beta \in (0,1)$ we obtain the limit $\ell$ and the sequence $\left<b_k\right>_1^\ell$ using the following iteration scheme:\\

\IncMargin{1em}
\begin{algorithm}[H]
\SetKwInOut{Input}{input}\SetKwInOut{Output}{output}
\Input{the base $\alpha \in(0,1)\backslash\IQ, \tab$ the initial seed $\beta \in (0,1)$}
\Output{the limit $\ell \in \IZ_{\ge 0}^\infty$, the $\alpha$--admissible digit sequence $\left<b_k\right>_1^\infty$}
\BlankLine
use algorithm \ref{rcf} and formula \eqref{theta_recursion} to obtain the sequence $\left<|\theta_k|\right>_0^\infty$\; 
set $\beta_0 := \beta, \ell := \infty,\tab k=1$\;
\While{$\beta_{k-1} > 0$}
{
set $b_k := \lfloor \beta_{k-1}/|\theta_{k-1}|\rfloor$\;
set $\beta_k := \beta_{k-1} - b_k|\theta_{k-1}|$\;
set $k:=k+1$\;
}
set $\ell := k-1$\;
\caption{Absolute Ostrowski Expansion}\label{ost_abs_inf}
\end{algorithm}\DecMargin{1em}
\vspace{1pc}

\noindent Since $\beta_0>0$, we must have $b_k\ge 1$ at least once. The assignment of $b_k$ in line--4 yields the inequality
\begin{equation}\label{b_k}
b_k|\theta_{k-1}| \le \beta_{k-1} < (b_k+1)|\theta_{k-1}|, \ttab 1 \le k < \ell+1.
\end{equation}
By the assignments of line--4 and line--5, we have 
\[\dfrac{\beta_{k-1}}{|\theta_{k-1}|}= b_k + \dfrac{\beta_{k}}{|\theta_{k-1}|} = \left\lfloor\dfrac{\beta_{k-1}}{|\theta_{k-1}|}\right\rfloor + \dfrac{\beta_{k}}{|\theta_{k-1}|},  \ttab 1 \le k < \ell+1,\]
that is, $b_k$ and $\beta_k$ are the quotient and remainder of the division of $\beta_{k-1}$ by $|\theta_{k-1}|$, hence $\beta_k < |\theta_{k-1}|$. This inequality and the inequalities \eqref{theta_a_k} and \eqref{b_k} imply that
\[b_k|\theta_{k-1}| \le \beta_{k-1} < |\theta_{k-2}| < (a_k+1)|\theta_{k-1}|,  \ttab 1 \le k < \ell+1.\]
Then for all $k$ we have $0 \le b_k \le a_k$ and, since the sequence $\left<|\theta_k|\right>_0^\infty$ is strictly decreasing to zero, we must also have $b_k\ge 1$ at least once, thus satisfying condition--(i). A simple comparison of the the sum \[\beta = \beta_0 = b_1|\theta_0| + \beta_1 = b_1|\theta_0| + b_2|\theta_1| + \beta_2 = ... = \sum_{k=1}^\ell{b_k}|\theta_{k-1}|\]
to the convergent series in proposition \ref{conv}, establishes its convergence and confirms that $\beta = \sum_{k=1}^\ell{b_k}|\theta_{k-1}|$. Furthermore, the Archemedean property of the field of real numbers asserts the uniqueness of the quotient $b_k$ and remainder $\beta_k$  in each iteration. Since this iteration terminates precisely when $\ell$ is finite, $\beta_{\ell-1}>0$ and $\beta_{\ell}=0$, the limit $\ell$ must be unique with $b_\ell \ge 1$ whenever it is finite.\\  

\noindent To establish condition--(ii), suppose $b_k=a_k$. Then we use the recursion formula \eqref{theta_recursion} and the iterative definitions of $b_k$ and $\beta_k$ in line--4 and line--5 to obtain the inequality
\[\beta_k = - b_k|\theta_{k-1}| + \beta_{k-1}= - a_k|\theta_{k-1}| + \beta_{k-1} =  |\theta_k| - |\theta_{k-2}| + \beta_{k-1}\]
\[=|\theta_k| - |\theta_{k-2}| - b_{k-1}|\theta_{k-2}| +  |\beta_{k-2}|< |\theta_k| - (b_{k-1}+1)|\theta_{k-2}| + (b_{k-1}+1)|\theta_{k-2}| = |\theta_k|.\]
Thus $\beta_k/|\theta_k| < 1$ so that in line--4 of the next iteration we must assign $b_{k+1} := 0$ as desired. \\

\noindent Finally, to establish condition--(iii), we assume by contradiction that $b_k \le a_k-1$ for only finitely many odd indexes $k$. Then we must have $\ell = \infty$ and there is some index $n \ge 1$ for which $b_{2k+1} = a_{2k+1}$ for all $k \ge n$. After we apply algorithm \ref{ost_abs_inf} to the inputs $\alpha := \alpha_{n-1}$ and $\beta := \beta_{n-1}$, we use the unitary representation \eqref{unit_rep} to arrive at the contradiction
\[1 = \sum_{k=0}^\infty{a_{2k+1}\theta_{2k}} = \sum_{k=0}^\infty{b_{2k+1}|\theta_{2k}|} \le \sum_{k:=1}^\infty{b_k|\theta_{k-1}|} = \beta < 1.\]
Thus $b_k \le a_k-1$ for infinitely many odd indexes $k$. To show this is true for infinitely many even indexes as well, we simply rewrite $\beta_0 := \beta_1$ so that all even indexes now become odd and repeat the previous argument. 
\end{proof}

\begin{corollary}\label{base_alpha}
Every real number $r$ can be uniquely expanded base--$\alpha$ as 
\[r = \displaystyle{\sum_{k=0}^\ell}b_k|\theta_{k-1}|,\] 
where $\ell \in \IZ_{\ge0}^\infty, b_0 \in \IZ$ and $\left<b_k\right>_1^\ell$ is an $\alpha$--admissible digit sequence.
\end{corollary}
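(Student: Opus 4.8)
The plan is to reduce the statement to Theorem~\ref{Ost_abs} by peeling off the integer part of $r$ via the floor function, exploiting the fact that $|\theta_{-1}| = 1$ (from the initial value $\theta_{-1} = -1$ in \eqref{theta_recursion}), so that the $k=0$ term of the proposed expansion is exactly $b_0|\theta_{-1}| = b_0$. Thus the claimed expansion is really $r = b_0 + \sum_{k=1}^\ell b_k|\theta_{k-1}|$, an integer shift of an absolute Ostrowski expansion of a number in $[0,1)$.

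For existence, I would set $b_0 := \lfloor r \rfloor$ and $\beta := r - b_0 \in [0,1)$. If $\beta = 0$ (that is, $r \in \IZ$), I take $\ell := 0$ with the empty all-zero tail, so that $r = b_0 = b_0|\theta_{-1}| = \sum_{k=0}^0 b_k|\theta_{k-1}|$. If instead $\beta \in (0,1)$, Theorem~\ref{Ost_abs} furnishes a limit $\ell \in \IN^\infty$ and an $\alpha$--admissible sequence $\left<b_k\right>_1^\ell$ (with $b_\ell \ge 1$ when $\ell$ is finite) satisfying $\beta = \sum_{k=1}^\ell b_k|\theta_{k-1}|$; adding $b_0|\theta_{-1}| = b_0$ recovers $r = \sum_{k=0}^\ell b_k|\theta_{k-1}|$.

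For uniqueness, suppose $r = \sum_{k=0}^\ell b_k|\theta_{k-1}|$ is any expansion of the stated form. The key observation is that the admissible tail $\sum_{k=1}^\ell b_k|\theta_{k-1}|$ lies in $[0,1)$: it equals $0$ when $\ell = 0$, and lies in $(0,1)$ when $\ell \ge 1$ by the first half of Theorem~\ref{Ost_abs}. Since $b_0 \in \IZ$, this confines the tail to the fractional part, forcing $b_0 = \lfloor r \rfloor$ and hence determining $\beta = r - b_0 = \sum_{k=1}^\ell b_k|\theta_{k-1}|$. If $\beta = 0$, the strict positivity of nonempty admissible sums in Theorem~\ref{Ost_abs} forces $\ell = 0$ and the empty tail; if $\beta \in (0,1)$, the uniqueness half of Theorem~\ref{Ost_abs} pins down $\ell$ and every digit $b_k$ for $k \ge 1$.

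The substantive content here is entirely inherited from Theorem~\ref{Ost_abs}; the only genuine point to get right is the bookkeeping at the boundary, namely recognizing that $|\theta_{-1}| = 1$ identifies the leading term with the integer $b_0$, and that admissible tails are confined to $[0,1)$, which is exactly what forces $b_0 = \lfloor r \rfloor$ and cleanly separates the integer case $\beta = 0$ from the generic case $\beta \in (0,1)$. I expect no substantial obstacle beyond this short case analysis.
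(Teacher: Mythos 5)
Your proposal is correct and follows essentially the same route as the paper's own proof: peel off $b_0 = \lfloor r\rfloor$ using $|\theta_{-1}|=1$, invoke Theorem~\ref{Ost_abs} for the fractional part, and use the fact that admissible tails lie in $(0,1)$ (or vanish when $\ell=0$) to force $b_0 = \lfloor r\rfloor$ in any competing expansion. Your treatment of the integer case and the uniqueness bookkeeping is, if anything, slightly more explicit than the paper's, but there is no substantive difference.
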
 

\begin{proof}
If $r$ is an integer we set $\ell:=0, \tab b_0:=r$ so that, by the definition of $\theta_{-1}=-1$ in the recursive formula \eqref{theta_recursion}, we obtain the vacuous expansion $r=b_0|\theta_{-1}|$. Furthermore, since $\sum_{k=1}^\ell{b_k}|\theta_{k-1}| \in (0,1)$ for all $\alpha$--admissible digit sequences $\left<b_k\right>_1^\infty$,  this expansion is unique. Otherwise, we set $b_0 := \lfloor r \rfloor$ and apply the theorem to $\beta_0 := r - b_0|\theta_{-1}| = r - \lfloor r \rfloor \in (0,1)$ and obtain the desired expansion. If $\left<b_k'\right>_0^{\ell'}$ is another $\alpha$--expansion for $r$, then $\sum_{k=1}^\ell{b_k}'|\theta_{k-1}| \in (0,1)$, hence we must have $b_0'=b_0=\lfloor r \rfloor$. The uniqueness of this expansion now guarantees that $\ell=\ell'$ and $b_k=b_k'$ for all $1 \le k < \ell$.
\end{proof}

\section{The Alternating Ostrowski Expansion}

\noindent The base--($-\alpha$) Alternating Ostowski Expansion is a sum of the form $\sum_{k:=1}^\infty{d_k}\theta_{k-1}$, where $0 \le d_k \le a_k$ along with all its finite truncations. As in the absolute case, uniqueness is not guaranteed. For instance,  after setting $\ell:=\infty, \tab c_1 := a_1-1,\tab c_{2k+1} := a_{2k+1}$ and $c_{2k}=0$ for all $k$, we use the definition \eqref{theta_def} of $\theta_0 := \alpha$ and the identity \eqref{unit_rep} to see that
\[\sum_{k=1}^\ell{c_k}\theta_{k-1} = \sum_{k=0}^\infty{c_{2k+1}}\theta_{2k} = \sum_{k=0}^\infty{a_{2k+1}}\theta_{2k} - \theta_0 = 1 -\alpha\]
is a different expansion than the one in the identity \eqref{cond_rep}. To achieve uniqueness, we will require the digit sequence to satisfy a refinement of the Markov Conditions. Given $\ell \in \IN^\infty$ and a sequence $\left<c_k\right>_1^\ell$, we say this sequence is $\boldsymbol{(-\alpha)}$\bf{--admissible} when it is $\alpha$--admissible and: 
\begin{enumerate}[label=(\roman*)]
\item if $c_{k+1} = 0$ then $c_k = a_k$ for all $1\le k < \ell-1$.
\item if $\ell=\infty$ then $c_k \ge 1$ for infinitely many odd and even indexes $k$.  
\end{enumerate}  
\begin{theorem}\label{Ost_alt}
For all $\ell \in \IZ_{\ge0}^\infty$ and $(-\alpha)$--admissible digit sequences $\left<c_k\right>_1^\infty$, we have\\ $\sum_{k=1}^\infty{c_k\theta_{k-1}} \in (-\alpha,1)$. Furthermore, for every real number $\gamma \in (-\alpha,1)$, there exists a unique limit $\ell \in \IZ_{\ge0}^\infty$ and a $(-\alpha)$--admissible digit sequence $\left<c_k\right>_1^\ell$ (with $c_\ell \ge 1$ when $\ell$ is finite) such that $\gamma = \sum_{k=1}^\ell{c_k}\theta_{k-1}$. 
\end{theorem}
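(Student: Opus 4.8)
The plan is to mirror the structure of the proof of Theorem \ref{Ost_abs}: first establish the containment $\sum_{k=1}^\infty c_k\theta_{k-1}\in(-\alpha,1)$ for every $(-\alpha)$--admissible sequence, and then recover the digits of a prescribed $\gamma$ by a greedy iteration analogous to algorithm \ref{ost_abs_inf}, showing along the way that its output is $(-\alpha)$--admissible and that no other admissible sequence represents $\gamma$. Throughout I use that the series converges absolutely by Proposition \ref{conv}, so it may be regrouped by parity, together with the sign pattern $\theta_{2j}>0>\theta_{2j-1}$ from \eqref{theta_alt}.

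For the containment I would avoid the case analysis of the absolute proof and instead exploit the two closed forms already in hand. Regrouping by parity and subtracting the unit representation \eqref{unit_rep} gives
\[1-\sum_{k=1}^\infty c_k\theta_{k-1}=\sum_{j=0}^\infty(a_{2j+1}-c_{2j+1})\theta_{2j}-\sum_{j=1}^\infty c_{2j}\theta_{2j-1}.\]
Every summand on the right is nonnegative, since $\theta_{2j}>0$ with $c_{2j+1}\le a_{2j+1}$ and $\theta_{2j-1}<0$ with $c_{2j}\ge0$; because condition--(iii) of $\alpha$--admissibility forces $a_{2j+1}-c_{2j+1}\ge1$ for infinitely many $j$, the first sum is strictly positive and the value is $<1$. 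Symmetrically, subtracting the self representation \eqref{self_rep} yields
\[\alpha+\sum_{k=1}^\infty c_k\theta_{k-1}=\sum_{j=0}^\infty c_{2j+1}\theta_{2j}+\sum_{j=1}^\infty(c_{2j}-a_{2j})\theta_{2j-1},\]
again nonnegative term by term and strictly positive by condition--(iii) on the even indices, so the value exceeds $-\alpha$. It is worth recording that this entire half uses only $\alpha$--admissibility; the two refinements (i) and (ii) are reserved for uniqueness.

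For existence I would run $\gamma_0:=\gamma$, $\gamma_k:=\gamma_{k-1}-c_k\theta_{k-1}$, choosing $c_k$ so as to keep the remainder inside the interval $I_k$ with endpoints $-\theta_{k-2}$ and $-\theta_{k-1}$; the start is legitimate because $I_1=(-\theta_{-1},-\theta_0)=(-\alpha,1)$ by \eqref{theta_recursion}. Using the interval arithmetic together with the two--sided estimate \eqref{theta_a_k}, one checks that the translates $\{c_k\theta_{k-1}+I_{k+1}:0\le c_k\le a_k\}$ cover $I_k$, so the invariant $\gamma_{k-1}\in I_k$ propagates and the produced digit always lies in $\{0,\dots,a_k\}$, with $\gamma_0\neq0$ forcing some $c_k\ge1$. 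That the output obeys $\alpha$--admissibility--(ii) and $(-\alpha)$--admissibility--(i), and the ``infinitely many odd and even indices'' conditions, I would verify exactly as in Theorem \ref{Ost_abs}: the adjacency patterns by computing $\gamma_k$ directly from the recursion \eqref{theta_recursion}, and the ``infinitely often'' clauses by supposing a terminal tail of maximal odd (resp.\ maximal even) digits and deducing the contradiction $\gamma=1$ (resp.\ $\gamma=-\alpha$) from \eqref{unit_rep} and \eqref{self_rep}.

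The genuine obstacle is uniqueness. Unlike the absolute case, where the half--open remainder interval $[0,|\theta_{k-1}|)$ makes the greedy digit an honest floor, here the requirement $\gamma_k\in I_{k+1}$ pins $c_k$ down only to within a window of length $1+\alpha_k\in(1,2)$, so on an ``overlap'' there are two candidate digits: the pair $(c_k,c_{k+1})=(a_k,0)$ versus a pair with $c_k<a_k$ and $c_{k+1}\ge1$ representing the same quantity. This is precisely the ambiguity that $\alpha$--admissibility--(ii) and $(-\alpha)$--admissibility--(i) eliminate, by enforcing $c_{k+1}=0\iff c_k=a_k$. The hard part will be to show that this tie--break is consistent at every step and, conversely, that any two $(-\alpha)$--admissible representations of the same $\gamma$ must coincide: comparing their first digits, their first--digit intervals overlap exactly on the ambiguous window, but on that window the refined conditions render only one choice legal, and an induction on $k$ (shifting $\gamma_0:=\gamma_1$ as in the absolute proof to pass between odd and even indices) then forces agreement of all digits and of $\ell$.
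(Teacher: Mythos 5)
Your first two parts track the paper's proof closely. The containment argument (regroup by parity, compare term by term against \eqref{unit_rep} and \eqref{self_rep}, get strictness from condition (iii)) is the same computation the paper compresses into one chain of inequalities, and it is correct; your interval invariant $\gamma_{k-1}\in I_k$ is exactly the paper's induction statement \eqref{gamma_k_induction}, which the paper realizes with the deterministic rule $c_k:=\min\{\lceil\gamma_{k-1}/\theta_{k-1}\rceil,a_k\}$ of algorithm \ref{ost_alt_inf} and verifies by a parity case analysis. The genuine gap is in uniqueness, and you flag it yourself: you reduce everything to the claims that on an overlap window ``the refined conditions render only one choice legal'' and that ``an induction on $k$ then forces agreement,'' but those claims \emph{are} the content of uniqueness, and no argument for them is given. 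Carrying them out is not a formality. One must first show that two admissible representations of the same $\gamma$ can have first digits differing by at most $1$ (this already needs a tail estimate: by \eqref{unit_rep} and \eqref{self_rep} the tails $\sum_{k\ge2}c_k\theta_{k-1}$ range over an interval of length $\alpha+|\theta_1|<2\alpha$); then, in the ambiguous case, that one of the two representations is forced into a tail riding maximal digits forever; and finally that such a tail violates condition (iii) of $\alpha$--admissibility or condition (ii) of $(-\alpha)$--admissibility. None of this descent is in your sketch. Relatedly, because your covering argument permits two digits at overlap points, your algorithm is not fully specified, so the promised verification of admissibility ``exactly as in Theorem \ref{Ost_abs}'' is not actually available; note also that the adjacency condition here is the converse implication ($c_{k+1}=0\Rightarrow c_k=a_k$) of the absolute one, and the paper proves it by a computation tied to its specific ceiling-with-cap rule.

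The paper closes uniqueness by a different and much shorter route that your proposal misses entirely: split any admissible representation by parity into $\gamma^+:=\sum_{k\ge0}c_{2k+1}\theta_{2k}$ and $\gamma^-:=-\sum_{k\ge1}c_{2k}\theta_{2k-1}$, both series of nonnegative terms, so that each is an Absolute Ostrowski expansion whose zero-interleaved digit string is $\alpha$--admissible; the uniqueness already established in Theorem \ref{Ost_abs} is then invoked to identify the odd-indexed and the even-indexed digits of any two representations, and $\ell=\ell'$ follows. In other words, the signed expansion is meant to be \emph{reduced} to the absolute one, not re-proved from scratch. Either adopt that reduction or carry out your overlap/extremal-tail analysis in full; as it stands, your proposal establishes the containment and the existence of an expansion, but not its uniqueness or the admissibility of the constructed digit sequence.
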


\begin{proof}
Given a limit $\ell \in \IN^\infty$ and a $(-\alpha)$--admissible digit sequence $\left<c_k\right>_1^\ell$, we use condition--(i), condition--(iii) and the identities \eqref{theta_alt}, \eqref{self_rep} and \eqref{unit_rep} to obtain the inequality
\[-\alpha = \sum_{k=1}^\infty{a_{2k}}\theta_{2k-1} < \sum_{k=1}^\ell{c_k}\theta_{k-1}\le \sum_{k=0}^\ell{c_{2k+1}}\theta_{2k} < \sum_{k=0}^\infty{a_{2k+1}}\theta_{2k}= 1\]
and assert that $\sum_{k=1}^\ell{c_k}\theta_{k-1} \in (-\alpha,1)$. \\

\noindent Define the ceiling $\lceil r \rceil$ of the real number $r$ to be the smallest integer larger than or equal to $r$. Given $\gamma \in (-\alpha,1)$, we obtain the index $\ell$ and the sequence $\left<c_k\right>_1^\ell$ using the following iteration scheme:\\

\IncMargin{1em}
\begin{algorithm}[H]
\SetKwInOut{Input}{input}\SetKwInOut{Output}{output}
\Input{the base $\alpha \in(0,1)\backslash\IQ, \tab$ the initial seed $\gamma \in (-\alpha,1)$}
\Output{the limit $\ell \in \IZ_{\ge 0}^\infty$, the $(-\alpha)$--admissible digit sequence $\left<c_k\right>_1^\ell$}
\BlankLine
use algorithm \ref{rcf} and formula \eqref{theta_recursion} to obtain the sequence $\left<\theta_k\right>_0^\infty$\; 
set $\gamma_0 := \gamma, \ell := \infty, k:=1$\;
\While{$\gamma_{k-1} \ne 0$}
{
set $c_k := \min\{\lceil \gamma_{k-1}/\theta_{k-1}\rceil, a_k\}$\;
set $\gamma_k := \gamma_{k-1} - c_k\theta_{k-1}$\;
set $k:=k+1$\;
}
set $\ell := k-1$\;
\caption{Alternating Ostrowski Expansion}\label{ost_alt_inf}
\end{algorithm}\DecMargin{1em}
\vspace{1pc}
\noindent This iteration may terminate with a positive finite value for $\ell$ or continue indefinitely in which case $\ell = \infty$. We define the \bf{parity} $\rho(k)$ of $k$ to be one (zero) precisely when $k$ is odd (even), that is, $\rho(k) := \lceil k/2 \rceil - \lfloor k/2 \rfloor$ . We will first prove by induction that
\begin{equation}\label{gamma_k_induction}
\gamma_k \in \left(-\theta_{k-\rho(k)}, -\theta_{k-1+\rho(k)}\right), \ttab 0 \le k < \ell.
\end{equation}
By the definitions \eqref{theta_recursion} of $\theta_{-1}=-1$ and $\theta_0=\alpha$, the definition of $\gamma$ in the hypothesis and the assignment of line--2, we have $\gamma_0 = \gamma \in (-\alpha,1) = (-\theta_0,-\theta_{-1})$, hence the base case $k=0$ holds. After we assume its validity for $k-1$, we prove it is also true for $k$ by considering the two cases $\rho(k) \in \{0,1\}$ separately.\\

\noindent $\bullet$ If $\rho(k)=0$, then, by the induction assumption, we have $-\theta_{k-2}<\gamma_{k-1} < -\theta_{k-1}$. If in line--4, we set $c_k  =\lceil\gamma_{k-1}/\theta_{k-1}\rceil$, then from from formula \eqref{theta_alt} and the assignments of  line--5 we obtain
\[\dfrac{\gamma_k}{|\theta_{k-1}|}= -\dfrac{\gamma_k}{\theta_{k-1}} = c_k - \dfrac{\gamma_{k-1}}{\theta_{k-1}} = \left\lceil\dfrac{\gamma_{k-1}}{\theta_{k-1}}\right\rceil - \dfrac{\gamma_{k-1}}{\theta_{k-1}} \ge 0.\]
hence $\gamma_k \ge 0$. We use this inequality to obtain
\[-1< \dfrac{\gamma_{k-1}}{\theta_{k-1}} - \left\lceil\dfrac{\gamma_{k-1}}{\theta_{k-1}}\right\rceil = \dfrac{\gamma_{k-1}}{\theta_{k-1}} - c_k = \dfrac{\gamma_k}{\theta_{k-1}} = -\dfrac{\gamma_k}{|\theta_{k-1}|}\]
and conclude that $0 \le \gamma_k < -\theta_{k-1}$. If $c_k=a_k \le \lceil\gamma_{k-1}/\theta_{k-1}\rceil-1$, then
\[ \dfrac{\gamma_k}{|\theta_{k-1}|} = a_k-\dfrac{\gamma_{k-1}}{\theta_{k-1}} \le \left\lceil\dfrac{\gamma_{k-1}}{\theta_{k-1}}\right\rceil - 1 - \dfrac{\gamma_{k-1}}{\theta_{k-1}}  \le 0,\]
hence $\gamma_k\le0$. The recursion formula \eqref{theta_recursion}, the assignment of line--5 and the induction assumption will now yield 
\[0 \le -\gamma_k = a_k\theta_{k-1}-\gamma_{k-1} < a_k\theta_{k-1}+\theta_{k-2} = \theta_k.\]
Conclude that $-\theta_k<\gamma_k<-\theta_{k-1}$, which is the desired statement for the even index $k$.\\

\noindent $\bullet$ If $\rho(k)=1$, then, by the induction assumption, we have $-\theta_{k-1}<\gamma_{k-1} < -\theta_{k-2}$. If in line--4, we set $c_k  =\lceil\gamma_{k-1}/\theta_{k-1}\rceil$, then from formula \eqref{theta_alt} and the assignments of  line--5 we obtain
\[\dfrac{\gamma_k}{|\theta_{k-1}|}= \dfrac{\gamma_k}{\theta_{k-1}} = \dfrac{\gamma_{k-1}}{\theta_{k-1}} - c_k = \dfrac{\gamma_{k-1}}{\theta_{k-1}} - \left\lceil\dfrac{\gamma_{k-1}}{\theta_{k-1}} \right\rceil \le0,\]
hence $\gamma_k \le 0$. We use this inequality to obtain
\[-1< \dfrac{\gamma_{k-1}}{\theta_{k-1}} - \left\lceil\dfrac{\gamma_{k-1}}{\theta_{k-1}}\right\rceil = \dfrac{\gamma_{k-1}}{\theta_{k-1}} - c_k = \dfrac{\gamma_k}{\theta_{k-1}} = -\dfrac{\gamma_k}{|\theta_{k-1}|}\]
and conclude that $-\theta_{k-1}<\gamma_k\le0$. If $c_k=a_k \le \lceil\gamma_{k-1}/\theta_{k-1}\rceil-1$, then
\[ \dfrac{\gamma_k}{|\theta_{k-1}|} = \dfrac{\gamma_{k-1}}{\theta_{k-1}} - a_k \ge \dfrac{\gamma_{k-1}}{\theta_{k-1}}- \left\lceil\dfrac{\gamma_{k-1}}{\theta_{k-1}}\right\rceil + 1 \ge 0,\]
hence $\gamma_k\ge0$. The recursion formula \eqref{theta_recursion}, the assignment of line--5 and the induction assumption will now yield 
\[0 \le \gamma_k = \gamma_{k-1}-a_k\theta_{k-1} <-\theta_{k-2} - a_k\theta_{k-1} = -\theta_k.\]
Conclude that $-\theta_{k-1}<\gamma_k<-\theta_k$ for this case, which is the desired statement for the odd index $k$. This concludes the proof and asserts the validity of formula \eqref{gamma_k_induction}.\\ 

\noindent To establish condition--(i), assume that $\ell > 0$. Clearly by its definition in line--4, we have $c_k \le a_k$. Furthermore, by formula \eqref{gamma_k_induction} we either have have $-|\theta_k| = -\theta_k < \gamma_k$ when $\rho(k)=0$ or $\gamma_k < -\theta_k = -|\theta_k|$ when $\rho(k)=1$. In either case we see that $\gamma_k/\theta_k>-1$, so by the definition of $c_k$ in line--4 we conclude that $0 \le c_k \le a_k$ for all $k$ as desired. A simple comparison of the absolute terms in the the sum 
\[\gamma = \gamma_0 = c_1\theta_0 + \gamma_1 = c_1\theta_0 + c_2\theta_1 + \gamma_2 = ... = \sum_{k:=1}^\ell{c_k}\theta_{k-1},\]
to the convergent series in proposition \ref{conv}, establishes its convergence and confirms that $\gamma = \sum_{k=1}^\ell{c_k}\theta_{k-1}$. \\

\noindent To prove uniqueness, we split $\gamma$ into its positive and negative parts and invoke the uniqueness of the Absolute Ostrowski Expansion. More precisely, suppose $\left<c_k\right>_1^{\ell}$ is a $(-\alpha)$--admissible sequence such that $\gamma = \sum_{k=1}^{\ell}{c_k}\theta_{k-1}$. We first pad this sequence with an infinite tail of zeros whenever $\ell$ is finite and then define the terms
\[b_k^0 := \begin{cases} c_{k/2}, & \rho(k)=0\\ 0, & \rho(k)=1\end{cases}, \ttab b_k^1 := \begin{cases} 0, & \rho(k)=0\\ c_{(k+1)/2}, & \rho(k)=1 \end{cases}\] 
and the factors
\[\gamma^+ := \sum_{k=0}^{\infty}{c_{2k+1}}\theta_{2k} = \sum_{k=1}^{\infty}{b^1_k}|\theta_{k-1}|, \ttab \gamma^- := -\sum_{k=1}^{\lceil\ell/2\rceil}{c_{2k}}\theta_{2k-1} = \sum_{k=1}^{\infty}{b^0_k}|\theta_{k-1}|,\]
so that $\gamma = \gamma^+ - \gamma^-$. If $\left<c'_k\right>_1^{\ell'}$ is another $(-\alpha)$--admissible sequence such that\\ $\gamma = \sum_{k=1}^{\ell'}{c'_k}\theta_{k-1}$, then, we also pad it with a tail of zeros when applicable. Since both the sequences $\left<b_k^0\right>_1^\infty$ and $\left<b_k^1\right>_1^\infty$ are $\alpha$--admissible, the uniqueness of the absolute expansion implies that
\[\gamma^+ = \sum_{k=0}^{\infty}{c_{2k+1}}|\theta_{2k}| = \sum_{k=1}^{\infty}b_k^1|\theta_{k-1}| = \sum_{k=0}^{\infty}{c'_{2k+1}}|\theta_{2k}|\]
and
\[\gamma^- = \sum_{k=0}^{\infty}{c_{2k}}|\theta_{2k-1}| = \sum_{k=1}^{\infty}b_k^0|\theta_{k-1}| = \sum_{k=0}^{\infty}{c'_{2k}}|\theta_{2k-1}|,\]
hence $\left<c_k\right>_1^\infty = \left<c_k'\right>_1^\infty$. Furthermore, we must have $\ell=\ell'$ for otherwise we will obtain two distinct representations for either
\[\gamma^+ = \sum_{k=0}^{\lceil \ell/2 \rceil}{c_{2k+1}}\theta_{2k} = \sum_{k=0}^{\lceil \ell'/2 \rceil}{c_{2k+1}}\theta_{2k} \ttab \text{or} \ttab \gamma^- = \sum_{k=1}^{\lfloor\ell/2\rfloor}{c_{2k}}|\theta_{2k-1}| = \sum_{k=1}^{\lfloor\ell'/2\rfloor}{c_{2k}}|\theta_{2k-1}|,\]
contrary to the uniqueness of the absolute expansion. Conclude that this alternating expansion is also unique.\\

\noindent To establish condition--(ii), suppose $c_{k+1} = 0$. Then by it assignment in line--4, we must have 
\[\dfrac{\gamma_k}{\theta_k} < \left\lceil \dfrac{\gamma_k}{\theta_k} \right\rceil = c_{k+1} = 0\]
so that, using formulas \eqref{theta_alt}, we obtain that $\gamma_k>0$ precisely when $k$ is odd. Since $\theta_{k-1}>0$ precisely when $k$ is even, by the assignment of line--5, we will have
\[c_k = \dfrac{c_k\theta_{k-1}}{\theta_{k-1}} < \dfrac{\gamma_k+c_k\theta_{k-1}}{\theta_{k-1}} = \dfrac{\gamma_{k-1}}{\theta_{k-1}} \le \left\lceil\dfrac{\gamma_{k-1}}{\theta_{k-1}}\right\rceil.\]
Therefore, by its definition in line--4, we conclude that $c_k=a_k$. Finally, to establish condition--(iii), we assume by contradiction that $c_k \ge 1$ for only finitely many odd indexes $k$. Then we must have $\ell = \infty$ and there is some index $n \ge 1$ for which $c_{2k+1} = 0$ for all $k \ge n$. Then by condition-(ii) we must have $c_{2k}=a_{2k}$ for all $ k \ge n$. After we apply algorithm \ref{ost_alt_inf} to the inputs $\alpha := \alpha_{n-1}$ and $\gamma := \gamma_{n-1}$, we use the self representation \eqref{self_rep} to arrive at the contradiction
\[-\alpha < \gamma = \sum_{k=1}^\infty{c_k\theta_{k-1}} = \sum_{k=0}^\infty{c_{2k+1}\theta_{2k}} + \sum_{k=1}^\infty{c_{2k}}\theta_{2k-1} = \sum_{k=1}^\infty{a_{2k}}\theta_{2k-1} = -\alpha.\]
If $c_k \ge 1$ for only finitely many even indexes $k$, then we must have $\ell = \infty$ and there is some index $n \ge 1$ for which $c_{2k} = 0$ for all $k \ge n$. Then by condition-(ii) we must have $c_{2k-1}=a_{2k-1}$ for all $ k \ge n$. After we apply algorithm \ref{ost_alt_inf} to the inputs $\alpha := \alpha_{n-1}$ and $\gamma := \gamma_{n-1}$, we use the unitary representation \eqref{unit_rep} to arrive at the contradiction
\[1 = \sum_{k=0}^\infty{a_{2k+1}\theta_{2k}} = \sum_{k=0}^\infty{c_{2k+1}\theta_{2k}} = \sum_{k=0}^\infty{c_{2k+1}\theta_{2k}} + \sum_{k=1}^\infty{c_{2k}}\theta_{2k-1} = \sum_{k=1}^\infty{c_k\theta_{k-1}} = \gamma < 1.\]
\end{proof}

\begin{corollary}
Every real number $r$ can be uniquely expanded base--$(-\alpha)$ as 
\[r = \displaystyle{\sum_{k=0}^\ell}c_k\theta_{k-1},\] 
where $\ell \in \IZ_{\ge0}^\infty, \tab c_0 \in \IZ$ and $\left<c_k\right>_1^\ell$ is a $(-\alpha)$--admissible digit sequence with $c_1 \ge 1$.
\end{corollary}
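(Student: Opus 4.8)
The plan is to mirror the proof of Corollary \ref{base_alpha}, extracting an integer ``digit'' $c_0$ from $r$ through the term $c_0\theta_{-1} = -c_0$ (recall $\theta_{-1} = -1$ from \eqref{theta_recursion}) and then applying Theorem \ref{Ost_alt} to the remaining fractional part. The one genuinely new wrinkle, absent in the absolute case, is that the admissible range $(-\alpha,1)$ furnished by Theorem \ref{Ost_alt} has length $1+\alpha > 1$; consequently the requirement $\gamma := \sum_{k=1}^\ell c_k\theta_{k-1} \in (-\alpha,1)$ alone does \emph{not} pin down $c_0$, since consecutive integer translates of $(-\alpha,1)$ overlap. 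The hypothesis $c_1 \ge 1$ exists precisely to collapse this over-long range down to a half-open interval of length one.

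First I would establish the key reduction: for a $(-\alpha)$-admissible sequence one has $c_1 \ge 1$ if and only if $\gamma \in (0,1)$. The upper bound $\gamma < 1$ is immediate from Theorem \ref{Ost_alt}. For the sign, note that in line--4 of algorithm \ref{ost_alt_inf} the assignment is $c_1 = \min\{\lceil \gamma/\theta_0\rceil, a_1\}$ with $\theta_0 = \alpha > 0$ and $a_1 \ge 1$, so $c_1 \ge 1$ exactly when $\lceil \gamma/\alpha\rceil \ge 1$, i.e.\ exactly when $\gamma > 0$. Since the existence half of Theorem \ref{Ost_alt} shows the algorithm produces an admissible sequence summing to $\gamma$, while its uniqueness half shows there is only one such sequence, any admissible sequence for $\gamma$ must coincide with the algorithm's output; hence $\gamma \le 0$ forces $c_1 = 0$, and contrapositively $c_1 \ge 1$ forces $\gamma \in (0,1)$.

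With this reduction the two cases are routine integer-part bookkeeping. If $r \in \IZ$ I would set $\ell := 0$ and $c_0 := -r$, giving the vacuous expansion $r = c_0\theta_{-1}$; any competitor with $\ell \ge 1$ and $c_1 \ge 1$ would have tail $\gamma' \in (0,1)$ by the reduction, making $r = -c_0' + \gamma'$ non-integer, a contradiction, so $\ell = 0$ and $c_0 = -r$ are forced. If $r \notin \IZ$ I would set $c_0 := -\lfloor r \rfloor$, so that $\gamma := r + c_0 = r - \lfloor r\rfloor \in (0,1)\subset(-\alpha,1)$, and invoke Theorem \ref{Ost_alt} to obtain a unique $\ell$ and $(-\alpha)$-admissible sequence $\left<c_k\right>_1^\ell$ (with $c_\ell \ge 1$ when $\ell$ is finite) summing to $\gamma$; the reduction then gives $c_1 \ge 1$, and $r = c_0\theta_{-1} + \sum_{k=1}^\ell c_k\theta_{k-1} = \sum_{k=0}^\ell c_k\theta_{k-1}$.

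For uniqueness when $r \notin \IZ$, any admissible expansion with $c_1' \ge 1$ has tail $\gamma' = \sum_{k=1}^{\ell'} c_k'\theta_{k-1} \in (0,1)$ by the reduction, so $r = -c_0' + \gamma'$ with $\gamma'$ in a unit-length interval forces $c_0' = -\lfloor r\rfloor = c_0$ and $\gamma' = r - \lfloor r\rfloor = \gamma$; the uniqueness clause of Theorem \ref{Ost_alt} then yields $\ell' = \ell$ and $c_k' = c_k$ for all $k \ge 1$. I expect the only delicate step to be the reduction in the second paragraph --- verifying that the constraint $c_1 \ge 1$ is equivalent to positivity of the fractional remainder and thereby shrinks $(-\alpha,1)$ to the unit interval $(0,1)$. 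Once that is secured, the remainder is an exact parallel of Corollary \ref{base_alpha}, the only cosmetic change being that $\theta_{-1} = -1$ (rather than $|\theta_{-1}| = 1$) produces $c_0 = -\lfloor r\rfloor$ in place of $b_0 = \lfloor r\rfloor$.
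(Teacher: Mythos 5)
Your proof is correct, and its outer skeleton --- the integer case, the extraction $c_0 := -\lfloor r\rfloor$ via $\theta_{-1}=-1$, and the application of Theorem \ref{Ost_alt} to $r-\lfloor r\rfloor \in (0,1)$ --- coincides with the paper's; the difference lies in how the crux, namely that $c_1 \ge 1$ forces the tail into $(0,1)$ so that $c_0$ is pinned down, gets established. The paper handles this by a combinatorial trick: given a competitor $\left<c_k'\right>_1^{\ell'}$ with $c_1' \ge 1$, it replaces $c_1'$ by $c_1'-1$, asserts that the modified sequence is still $(-\alpha)$--admissible, and applies the theorem's range statement to it, obtaining $\sum_{k=1}^{\ell'}c_k'\theta_{k-1} = \theta_0 + \bigl((c_1'-1)\theta_0 + \sum_{k=2}^{\ell'}c_k'\theta_{k-1}\bigr) > \theta_0 - \alpha = 0$. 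You instead invoke the uniqueness clause of Theorem \ref{Ost_alt} to identify any admissible expansion of $\gamma$ with the output of algorithm \ref{ost_alt_inf}, and read the sign dichotomy off line--4: $c_1 = \min\{\lceil \gamma/\alpha \rceil, a_1\} \ge 1$ precisely when $\gamma > 0$. Your route buys robustness: the paper's decrement trick tacitly requires re-verifying admissibility of the modified sequence, and this can genuinely fail in edge cases (if $c_2' = 0$, the first extra condition of $(-\alpha)$--admissibility forces $c_1' = a_1$, so the decremented first digit $a_1 - 1$ violates that condition; and if $\ell' = 1$ with $c_1' = 1$, the decremented sequence is identically zero), whereas your identification argument needs no such bookkeeping. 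Indeed the forward direction of your reduction is the very observation ($\gamma_0/\theta_0 > 0$ gives $c_1 \ge 1$) that the paper itself uses for the existence half, so your proof is in effect a more uniform version of the paper's, reusing one lemma in both directions. The cost is that you lean on the full strength of the uniqueness clause, together with the fact (established inside the theorem's proof) that the unique admissible expansion is the algorithmic one, where the paper's trick, when its admissibility check goes through, needs only the range statement.
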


\begin{proof}
If $r$ is an integer we set $\ell:=0, \tab c_0:=-r$ and use the definition of $\theta_{-1}:=-1$ in the recursive formula \eqref{theta_recursion} to obtain the unique vacuous expansion $r=c_0\theta_{-1}$. Otherwise, we set $c_0 := -\lfloor r \rfloor$ and apply the theorem to $\gamma_0 := r - c_0\theta_{-1} = r - \lfloor r \rfloor \in (0,1)$. Since $\gamma_0/\theta_0 >  0$, by its definition in line--4 of algorithm \ref{ost_alt_inf} we set $c_1 \ge 1$  and derive the desired expansion. If $\left<c_k'\right>_1^{\ell'}$ is another $(-\alpha)$--admissible with $c_1' \ge 1$ then so is the sequence obtained from the concatenation of $\left<c_1'-1\right>$ with $\left<c_k'\right>_2^{\ell'}$. If we are further supplied with an integer $c_0'$ such that $r=\sum_{k=0}^\ell{c'_k}\theta_{k-1}$ , then by the theorem we have 
\[-\alpha = -\theta_0 < (c_1'-1)\theta_0 + \sum_{k=2}^\ell{c_k}'\theta_{k-1} <1,\] 
hence $\sum_{k=0}^{\ell'}c_k'\theta_{k-1} \in (0,1)$. Thus we must have $c_0'=c_0=-\lfloor r \rfloor$ and then the uniqueness for this expansion guarantees that $\ell=\ell'$ and $c_k=c_k'$ for all $1 \le k < \ell$. 
\end{proof}

\section{Acknowledgments}

\noindent This work could have not been completed without the ongoing guidance, encouragement and good company of Robbie Robinson from George Washington University. 



\begin{thebibliography}{[88]}




\bibitem{B1}  V. Berthé , {\it Diophantine approximation, Ostrowski numeration and the double-base number system}, Discrete Math Theory \& Theoretical Computer Science \bf{11:1} (2009), 153--172. 


\bibitem{B2} --, {\it Autour du système de numération d'Ostrowski}, Journées Montoises d'Informatique Théorique, Bull. Belg. Math. Soc. Simon Stevin \bf{8:2} (2001), 209--239. 



\bibitem{K} A. Ya. Khintchine, \emph{Continued fractions}, P. Noordhoff Ltd. Groningen (1963).

\bibitem{IN} S. Ito, H. Nakada, {\it Approximations of real numbers by the sequence \{na\} and their metrical theory}, Acta. Math. Hung. \bf{52:1--2} (1988), 91--100. 

\bibitem{R} A. Robinson, {\it Sturmian Expansions and Entropy}, Integers (electronic journal) \bf{11B} (2011).

\bibitem{S} L. Schaeffer, {\it Ostrowski numeration and the local period of Sturmian words (English Summery)}, Language and Automata Theory and Applications, Springer, Heidelberg (2013), 493--503

\bibitem{XJG} D. Xavier, J. Jacques, P. Giuseppe {\it Episturmian words and some constructions of de Luca and Rauzy. (English summary)}, Theoretical Computer Science \bf{255:1--2} (2001), 539--553. 

\end{thebibliography}
\end{document}